\documentclass[paper=a4,english,fontsize=11pt,parskip=half,abstract=true]{scrartcl}
\usepackage{babel}
\usepackage[utf8]{inputenc}
\usepackage[T1]{fontenc}
\usepackage[left=20mm,right=20mm,top=30mm,bottom=30mm]{geometry}
\usepackage{amsmath}
\usepackage{amsthm}
\usepackage{amssymb}
\usepackage{enumerate} 
\usepackage{thmtools}
\usepackage{mathtools}
\mathtoolsset{centercolon} 
\usepackage{booktabs}
\usepackage{longtable}
\usepackage[bookmarks=true,
            pdftitle={On a bound of Cocke and Venkataraman},
            pdfauthor={Benjamin Sambale},
            pdfkeywords={},
            pdfstartview={FitH}]{hyperref}

\newtheorem{Thm}{Theorem} 
\newtheorem{Lem}[Thm]{Lemma}
\newtheorem{Prop}[Thm]{Proposition}
\newtheorem{Cor}[Thm]{Corollary}
\numberwithin{equation}{section}

\setcounter{MaxMatrixCols}{25}
\allowdisplaybreaks[1]

\renewcommand{\phi}{\varphi}
\newcommand{\C}{\mathrm{C}}
\newcommand{\N}{\mathrm{N}}
\newcommand{\Z}{\mathrm{Z}}

\newcommand{\ZZ}{\mathbb{Z}}

\newcommand{\Aut}{\operatorname{Aut}}

\newcommand{\SL}{\operatorname{SL}}

\title{On a bound of Cocke and Venkataraman}
\author{Benjamin Sambale and Philipp Wellmann\footnote{Institut für Algebra, Zahlentheorie und Diskrete Mathematik, Leibniz Universität Hannover, Welfengarten 1, 30167 Hannover, Germany,
\href{mailto:sambale@math.uni-hannover.de}{sambale@math.uni-hannover.de}, \href{mailto:wellmann@stud.uni-hannover.de}{wellmann@stud.uni-hannover.de}}}
\date{\today}

\begin{document}
\frenchspacing
\maketitle
\begin{abstract}\noindent
Let $G$ be a finite group with exactly $k$ elements of largest possible order $m$. Let $q(m)$ be the product of $\gcd(m,4)$ and the odd prime divisors of $m$. We show that $|G|\le q(m)k^2/\phi(m)$ where $\phi$ denotes Euler's totient function. This strengthens a recent result of Cocke and Venkataraman. As an application we classify all finite groups with $k<36$. This is motivated by a conjecture of Thompson and unifies several partial results in the literature.
\end{abstract}

\textbf{Keywords:} finite groups, number of elements, maximal order\\
\textbf{AMS classification:} 20D60, 20E34

\section{Introduction}

Let $G$ be a finite group with maximal element order $m:=\max\{|\langle x\rangle|:x\in G\}$. Recently, Cocke and Venkataraman~\cite{CockeVen} have shown that the order of $G$ is bounded by a function on the number $k$ of elements of order $m$ in $G$. More precisely, $\phi(m)$ divides $k$ and $|G|\le mk^2/\phi(m)$ (here and in the following $\phi$ denotes Euler's totient function). The authors have noticed that their bound is best possible for the Frobenius group $G=C_p\rtimes C_{p-1}$ where $p$ is a prime. We have observed conversely that sharpness can only hold if $m$ is neither divisible by $8$ nor by the square of an odd prime. 
In fact, our main theorem gives a much stronger bound on $|G|$. To state it, we introduce the following notation.

Let $p_1,\ldots,p_r$ be the distinct prime divisors of a positive integer $n$. We define $q(n):=p_1\ldots p_r$ if $4\nmid n$ and $q(n):=2p_1\ldots p_r$ if $4\mid n$ (note that $q(1)=1$, the empty product). 

\begin{Thm}\label{main}
Let $G$ be a finite group with exactly $k$ elements of maximal order $m$. Then
\[|G|\le \frac{q(m)k^2}{\phi(m)}.\]
\end{Thm}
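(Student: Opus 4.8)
The plan is to count cyclic subgroups rather than individual elements. Write $c$ for the number of cyclic subgroups of order $m$ in $G$. Every element of order $m$ generates a unique such subgroup, and each of these subgroups has exactly $\phi(m)$ generators, so the $k$ elements of order $m$ split into $c$ blocks of size $\phi(m)$ and $k=\phi(m)c$ (which recovers $\phi(m)\mid k$). Now let $G$ act by conjugation on the set of these $c$ subgroups and fix one of them, $H=\langle x\rangle$. Orbit--stabiliser gives $|G:\N_G(H)|\le c$, hence $|G|\le c\,|\N_G(H)|$. Comparing with the target $\frac{q(m)k^2}{\phi(m)}=q(m)\phi(m)c^2=q(m)\,c\,k$, it therefore suffices to prove $|\N_G(H)|\le q(m)\,k$.

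Set $N:=\N_G(H)$, so that $H$ is a \emph{normal} cyclic subgroup of $N$ of order $m$, still the maximal element order in $N$. Writing $k_N$ for the number of elements of order $m$ in $N$ we have $k_N\le k$, so the theorem reduces to the local density estimate $|N|\le q(m)\,k_N$; equivalently, at least a $1/q(m)$ fraction of the elements of $N$ have order $m$. I would record early that this bound is sharp for $N=C_p\rtimes C_{p-1}$, for the holomorphs $C_{p^a}\rtimes\Aut(C_{p^a})$, and for dicyclic groups such as $Q_{12}$ (where exactly $|N|/q(m)$ elements attain order $m$), and use these extremal families to calibrate the argument.

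To prove the density estimate I would pass to $C:=\C_N(x)\trianglelefteq N$, with $A:=N/C$ embedded in $\Aut(H)\cong(\ZZ/m\ZZ)^\times$, an abelian group of order dividing $\phi(m)$; note $\langle x\rangle\le\Z(C)$ and that for each prime $p\mid m$ the Sylow subgroup $\langle x_p\rangle$ of $H$ is characteristic in $H$, hence normal in $N$. Partition $N=\bigsqcup_{\alpha\in A}N_\alpha$ into the $|C|$-element cosets on which $N$ induces the automorphism $\alpha$ of $H$, and analyse which cosets contain an element of order $m$. The key points are: (i) a coset $N_\alpha$ contributes at least as many order-$m$ elements as $C$ itself contains, once it contributes any; and (ii) the set $A_0\subseteq A$ of contributing cosets is governed by an extension condition — in the dicyclic example the inverting coset squares into the central involution and hence contains \emph{no} element of order $m$, which shows the naive criterion ``$\ord(\alpha)\mid m$'' is too weak. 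This reduces the estimate to the product inequality $\frac{|A_0|}{|A|}\cdot\frac{k_C}{|C|}\ge\frac1{q(m)}$, where $k_C$ counts order-$m$ elements in the central case $C$. The central factor I would handle Sylow-by-Sylow, using the multiplicativity $q(m)=\prod_p q(p^{a})$ and the $p$-group statement that a finite $p$-group with a normal cyclic subgroup of order equal to its exponent $p^{a}$ has at least $|P|/q(p^{a})$ elements of order $p^{a}$.

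The main obstacle is this last, non-nilpotent and non-central, assembly step: order-$m$ elements genuinely lie outside $\C_N(x)$ — already in the holomorph of $C_9$ two thirds of them do — so one cannot simply multiply Sylow contributions, and the subgroup $A_0$ must be pinned down through the extension class rather than through element orders alone. The prime $2$ is the delicate case, since $\Aut(C_{2^{a}})$ is non-cyclic for $a\ge 3$, and this is precisely why $q$ carries only the factor $\gcd(m,4)$ instead of a full power of $2$. I expect the bulk of the work to be a careful arithmetic count bounding $|A:A_0|$ together with the central density at the prime $2$, with the extremal families above serving as the sharpness benchmarks.
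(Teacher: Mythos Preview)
Your reduction to $|\N_G(H)|\le q(m)\,k$ is correct and is exactly where the paper begins as well. From that point on, however, the plan is missing the idea that actually makes the bound go through, and the difficulty you flag as the ``main obstacle'' is genuine rather than cosmetic.

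The paper does not attempt to identify which $C$-cosets contribute. Instead it singles out an explicit intermediate subgroup $C\le N\le\N_G(H)$: the preimage of the principal units $\{s\in(\ZZ/m\ZZ)^\times:s\equiv 1\pmod{q(m)}\}$ under $\N_G(H)/C\hookrightarrow\Aut(H)$, so that $|\N_G(H):N|\le\phi(q(m))$. The key claim is that \emph{every} coset of $H$ (not of $C$) inside this $N$ carries at least $\phi(m)$ elements of order $m$; this yields $|N|\le mk/\phi(m)$ and hence $|\N_G(H)|\le\phi(q(m))\cdot(m/\phi(m))\cdot k=q(m)\,k$. The coset claim is a short arithmetic computation: if $x^y=x^s$ with $q(m)\mid s-1$ and $q:=m/p_i$, then expanding $(s^q-1)/(s-1)$ binomially gives $1+s+\dots+s^{q-1}\equiv q\pmod m$, so $(yx^\alpha)^q=y^qx^{q\alpha}$ with $y^q\in C$; one then chooses $\alpha\pmod{p_i^{a_i}}$ independently for each $i$ to force $p_i^{a_i}\mid\ord(yx^\alpha)$, producing $\phi(m)$ good values of $\alpha$.

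Your route instead rests on claim~(i), that a contributing $C$-coset contains at least $k_C$ order-$m$ elements, together with a lower bound on $|A_0|/|A|$. Neither is supplied: for (i) there is no evident mechanism transporting the $k_C$ witnesses from $C$ into $yC$ once $y$ centralises nothing in $C$ beyond $H$; and bounding $|A_0|$ through the extension class is precisely the obstacle you name but do not resolve. The proposed Sylow-by-Sylow treatment of $k_C/|C|$ is also fragile, since $C=\C_G(x)$ need not be nilpotent and its Sylow subgroups do not assemble multiplicatively. The paper's insight is exactly to replace the uncontrolled subset $A_0$ by the explicit principal-units subgroup, after which everything reduces to the congruence computation above; without that choice of subgroup your product inequality remains out of reach.
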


We remark that Cocke and Venkataraman's theorem has been extended but not strengthened in \cite{BretCocke}.

In the second part of the paper we enumerate finite groups with $k<36$ by computer calculations. This is motivated by the following conjecture of Thompson: If $G$ and $H$ are finite groups with the same multiset of element orders, then $G$ is solvable if and only if $H$ is solvable. Our classification unifies and completes many partial results in the literature for $k=20,22,24,28,30,\ldots$ (see references).\nocite{maxel20,maxel22,maxel24,maxel28,maxel30,maxel42,maxel44,maxel52,maxelvar} There are exactly $13007$ groups with $k<36$, the largest have order $3264$.

The paper is based on the bachelor's thesis of the second author written under the supervision of the first author.

\section{Proof of the main theorem}

Let $x\in G$ be an element of maximal order. In \cite{CockeVen}, the authors have shown that every coset of $\langle x\rangle$ in $\C_G(x)$ contains at least $\phi(m)$ elements of order $m$. To prove our stronger result, we show more generally that “most” cosets of $\langle x\rangle$ in $\N_G(\langle x\rangle)$ contain at least $\phi(m)$ elements of order $m$. 

\begin{proof}[Proof of \autoref{main}.]
Let $X:=\langle x\rangle\le G$ be a cyclic subgroup of maximal order $m$. Each conjugate of $X$ contains exactly $\phi(m)$ elements of order $m$. Hence, $|G:\N_G(X)|\phi(m)\le k$. 
Let $m=p_1^{a_1}\ldots p_r^{a_r}$ be the prime factorization of $m$ where $p_1<\ldots<p_r$. It is well-known that there is an injective homomorphism
\[\Gamma:\N_G(X)/\C_G(X)\to\Aut(X)\cong(\ZZ/m\ZZ)^\times\cong\bigtimes_{i=1}^r(\ZZ/p_i^{a_i}\ZZ)^\times.\]
For $p_i>2$, the group $(\ZZ/p_i^{a_i}\ZZ)^\times$ is cyclic of order $\phi(p_i^{a_i})=(p-1)p_i^{a_i-1}$ and its Sylow $p$-subgroup is generated by $1+p_i+p_i^{a_i}\ZZ$. On the other hand, for $p=2$ we have $(\ZZ/2^a\ZZ)^\times=\langle -1+2^a\ZZ\rangle\times\langle 5+2^a\ZZ\rangle$. If $p_1>2$, let $N/\C_G(X)$ be the preimage of $\bigtimes_{i=1}^r\langle 1+p_i+p_i^{a_i}\ZZ\rangle$ under $\Gamma$. Otherwise, let $N/\C_G(X)$ be the preimage of
\[\langle 5+2^{a_1}\ZZ\rangle\times\bigtimes_{i=2}^r\langle 1+p_i+p_i^{a_i}\ZZ\rangle.\]
In either case, $N/\C_G(X)$ is cyclic. Moreover, $|\N_G(X):N|\le (p_1-1)\ldots(p_r-1)$ if $4\nmid m$, and $|\N_G(X):N|\le 2(p_1-1)\ldots(p_r-1)$ otherwise.

We will show that every coset of $X$ in $N$ contains at least $\phi(m)$ elements of order $m$. It will then follow that $|N:X|\phi(m)\le k$ and 
\[|G|=|G:\N_G(X)||\N_G(X):N||N:X||X|\le\frac{mk^2}{\phi(m)^2}|\N_G(X):N|\le\frac{q(m)k^2}{\phi(m)}.\]
Let $y\in N$ and $x^y=y^{-1}xy=x^s$ for some $s\in\ZZ$. 
We aim to determine integers $\alpha$ such that $yx^\alpha$ has order $m$. 
Let $1\le i\le r$ and let $q:=m/p_i$. 
The choice of $N$ shows that $q(m)$ divides $s-1$. This implies
\[1+s+\ldots+s^{q-1}=\frac{s^q-1}{s-1}=\sum_{l=1}^q\binom{q}{l}(s-1)^{l-1}\equiv q\pmod{m}.\]
It follows that
\[(yx^\alpha)^q=y^q(x^\alpha)^{y^{q-1}}(x^\alpha)^{y^{q-2}}\ldots x^\alpha=y^qx^{(s^{q-1}+s^{q-2}+\ldots+1)\alpha}=y^qx^{q\alpha}.\]
Note that $y^q\in\C_G(X)$. It is easy to see that there are at least $\phi(p_i^{a_i})$ choices for $\alpha\pmod{p_i^{a_i}}$ such that the $p_i$-factor of $y^qx^{q\alpha}$ is non-trivial, i.\,e. the order of $yx^{\alpha}$ is divisible by $p_i^{a_i}$. 
By the Chinese remainder theorem, these choices can be made independently for each $i$. In total we obtain at least $\phi(m)=\phi(p_1^{a_1})\ldots\phi(p_r^{a_r})$ elements of order $m$ in the coset $yX$. 
\end{proof}

\section{Finite groups with few elements of maximal order}

As before, let $G$ be a finite group with exactly $k$ elements of maximal order $m$. We fix some $X=\langle x\rangle\le G$ of order $m$. To get more precise results we start by analyzing the proof of \autoref{main}. Suppose that we have equality $|G|=q(m)k^2/\phi(m)$. Then we obtain:
\begin{enumerate}[(i)]
\item $|G:\N_G(X)|=k/\phi(m)$, i.\,e. the cyclic subgroups of order $m$ are conjugate in $G$.

\item $|\N_G(X):N|=2^i(p_1-1)\ldots(p_r-1)$ (where $i=1$ if $4\mid m$ and $0$ otherwise). In particular, the numbers $p_i-1$ divide $|G|$.

\item All elements of order $m$ lie in $N$ and every coset of $X$ in $N$ contains exactly $\phi(m)$ elements of order $m$. 
Thus, $N\unlhd G$. 
Let $y\in\C_G(X)$. Then $\langle x,y\rangle$ is abelian and there exists $y'$ such that $\langle x,y\rangle=\langle x\rangle\times\langle y'\rangle$. Since $y'X$ has only $\phi(m)$ elements of order $m$, the order of $y'$ must divide $p_1^{a_1-1}\ldots p_r^{a_r-1}$. In particular, $|\C_G(X):X|$ divides $p_1^{a_1-1}\ldots p_r^{a_r-1}$. Thus, if $m$ is squarefree (i.\,e. $a_1=\ldots =a_r=1$), then $\C_G(X)=X$ and $mk^2/\phi(m)=|G|=|G:\C_G(X)||X|\le km$. Consequently, $k=\phi(m)$ and $G=\N_G(X)$. We extend this observation to the case $q(m)=m$. This characterizes groups with equality in Cocke--Venkataraman's original bound.
\end{enumerate}

\begin{Prop}\label{sharp}
With the notation above, suppose that $|G|=mk^2/\phi(m)$. Then
\begin{enumerate}[(i)]
\item $m=q(m)$,
\item $k=\phi(m)$,
\item $G$ has a unique cyclic subgroup $X$ of order $m$. 
\item $\C_G(X)=X$ and $G/X\cong\Aut(X)$.
\end{enumerate}
\end{Prop}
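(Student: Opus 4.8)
The first three assertions and half of the fourth should follow almost formally once everything is reduced to the single statement $\C_G(X)=X$, so my plan is to spend the effort isolating and killing one residual case. Since \autoref{main} gives $|G|\le q(m)k^2/\phi(m)\le mk^2/\phi(m)$, the hypothesis $|G|=mk^2/\phi(m)$ forces equality throughout, whence $q(m)=m$; this is (i), and it lets me invoke all the equality consequences (i)--(iii) recorded above: the cyclic subgroups of order $m$ form one conjugacy class, all elements of order $m$ lie in $N$, every coset of $X$ in $N$ carries exactly $\phi(m)$ of them, $N\trianglelefteq G$, and $|\C_G(X):X|$ divides $p_1^{a_1-1}\cdots p_r^{a_r-1}$. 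I claim it suffices to prove $\C_G(X)=X$. Indeed, by the choice of $N$ one has $N=\C_G(X)$ whenever $q(m)=m$ (see below), so $\C_G(X)=X$ gives $N=X$, hence $k=|N:X|\phi(m)=\phi(m)$ (ii) and $|G:\N_G(X)|=k/\phi(m)=1$, i.e. $G=\N_G(X)$; as $X=\C_G(X)\trianglelefteq G$, $X$ is the unique cyclic subgroup of order $m$ (iii), and $G/X=\N_G(X)/\C_G(X)\hookrightarrow\Aut(X)$ has order $|G|/m=k^2/\phi(m)=\phi(m)=|\Aut(X)|$, giving $G/X\cong\Aut(X)$ (iv).

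Next I would verify that $N=\C_G(X)$ for $q(m)=m$: every factor defining $N/\C_G(X)$ is trivial, the odd factors because the relevant $a_i=1$, and the factor at $2$ because either $a_1=1$ or $a_1=2$ with $5\equiv 1\pmod 4$. Thus $|\C_G(X):X|=|N:X|=k/\phi(m)$. If $4\nmid m$ then $p_1^{a_1-1}\cdots p_r^{a_r-1}=1$, so $\C_G(X)=X$ and we are done. The only surviving possibility is $4\mid m$, where $p_1^{a_1-1}\cdots p_r^{a_r-1}=2$, so $|\C_G(X):X|\in\{1,2\}$, and the whole Proposition comes down to excluding the value $2$. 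This is the main obstacle.

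So suppose, for contradiction, that $4\mid m$ and $\C_G(X)=N=\langle x\rangle\times\langle z\rangle\cong C_m\times C_2$ with $z^2=1$. Then $k=2\phi(m)$ and $N$ (hence $G$) contains exactly two cyclic subgroups of order $m$, namely $X$ and $Y:=\langle zx\rangle$; they meet in $\langle x^2\rangle$ and share the involution $x^{m/2}$, which is characteristic in $N$ (the unique square involution) and therefore central in $G$. Because $\C_G(x)=\C_G(zx)=N$, the class of $x$ consists of all $2\phi(m)$ elements of order $m$, all inside $N$. In particular $x$ and $zx$ are conjugate, so there is $g$ with $x^g=zx$; conjugation by $g$ induces $\sigma\in\Aut(N)$ with $\sigma(x)=zx$ and $\sigma(z)\in\{z,\,zx^{m/2}\}$ (the value $x^{m/2}$ being ruled out by $\sigma(Y)=Y$). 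The plan is to modify $g$ inside its coset $Ng$ so as to produce an element of order $m$ \emph{outside} $N$, contradicting the fact that all order-$m$ elements lie in $N$.

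The key computation is the case $\sigma(z)=z$, where $\sigma^2$ is trivial and $g^2\in\C_G(N)=N$. Replacing $g$ by $ng$ changes the square by $(ng)^2=n\,\sigma(n)\,g^2$, and the homomorphism $n\mapsto n\,\sigma(n)$ has image $\langle zx^2\rangle$, a cyclic group of order $m/2$ contained in $\Fix(\sigma)=\langle x^2,z\rangle\cong C_{m/2}\times C_2$. Since both cosets of $\langle zx^2\rangle$ in $\langle x^2,z\rangle$ contain elements of order $m/2$, I can choose $n$ so that $(ng)^2$ has order $m/2$; then $ng\notin N$ has order $m$, the desired contradiction. The genuine difficulty is the remaining case $\sigma(z)=zx^{m/2}$, in which $\sigma$ has order $4$ and $g^2\notin N$, so the clean ``twisting of the square'' breaks down. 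Here I expect the same philosophy---tracking the orders of the modified conjugators $ng$---to yield either an order-$m$ element outside $N$ or an element of order exceeding $m$, each impossible; settling this last case is the step I anticipate will require the most care. Once it is done, $\C_G(X)=X$ in all cases and the Proposition follows from the reduction of the first paragraph.
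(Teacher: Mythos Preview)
Your reduction to proving $\C_G(X)=X$ is correct, and the derivation of (ii)--(iv) from it is clean. However, there is a genuine gap in the step where you conclude $|\C_G(X):X|\in\{1,2\}$ in the $4\mid m$ case. You are relying on the preliminary remark that ``$|\C_G(X):X|$ divides $p_1^{a_1-1}\cdots p_r^{a_r-1}$'', but the justification recorded there actually only shows that each element $yX$ of $\C_G(X)/X$ has order dividing $p_1^{a_1-1}\cdots p_r^{a_r-1}$; that is, it bounds the \emph{exponent} of $\C_G(X)/X$, not its order. When $q(m)=m$ and $4\mid m$ this yields only that $N=\C_G(X)=X\times Y$ with $Y$ an elementary abelian $2$-group of unspecified rank, so one must rule out $|Y|=2^r$ for every $r\ge1$, not just $r=1$. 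Your conjugator argument is tailored to the situation of exactly two maximal cyclic subgroups in $N$ and a single swap $\sigma$, and it is not clear how to extend it when $N$ contains $2^r$ such subgroups permuted transitively by $G$. The acknowledged unfinished case $\sigma(z)=zx^{m/2}$ is a second, independent gap even within $r=1$.

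The paper's proof avoids both difficulties by a uniform argument inside $H:=\C_G(x^2)$. Since $(xy)^2=x^2$ for every $y\in Y$, the conjugators sending $x$ to the various $xy$ all lie in $H$, so $H$ acts transitively on the $2|Y|$ elements of order $4$ in $X_2\times Y$ (where $X_2$ is the Sylow $2$-subgroup of $X$); in particular $2|Y|$ divides $|H/N|$. On the other hand, any $2$-element $g\in H\setminus N$ commutes with the odd-order element $x^4$, so if $|g|\ge4$ then $x^4g\notin N$ would have order at least $m$, which is impossible; hence every such $g$ is an involution, and the same reasoning applied to $gz$ for $z\in X_2\times Y$ forces $z^g=z^{-1}$. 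Thus all nontrivial $2$-elements of $H/N$ act identically on $N$, so $H/N$ has at most one involution and $4\nmid|H/N|$, whence $|Y|=1$. This handles all ranks simultaneously and never needs your $\sigma(z)$ case split.
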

\begin{proof}
Since $q(m)\le m$, the claim $m=q(m)$ follows from \autoref{main}. The analysis above implies $\C_G(X)=N\unlhd G$ and $|N|=mk/\phi(m)$.  
It follows that $|G:\C_G(X)|=k$, i.\,e. all elements of order $m$ are conjugate in $G$. Hence, for every element $y\in N$ of order $m$ we obtain $\C_G(y)=N$. Therefore, $N$ is abelian. 
The above arguments show further that $N=X\times Y$ where $Y$ is an elementary abelian $2$-group. If $4\nmid m$, then $Y=1$. Now suppose $4\mid m$ and let $X_2$ be the Sylow $2$-subgroup of $X$. 
For every $y\in Y$ there exists $g\in G$ such that $x^g=xy$. This yields $g\in\C_G(x^2)$. Hence, $H:=\C_G(x^2)$ acts on $X_2\times Y$ and permutes the elements of order $4$ transitively. The number of those elements is $2|Y|$ which in turn must divide $|H/N|$. Let $g\in H\setminus N$ be a $2$-element. Then $g$ must be an involution, because otherwise $x^4g$ would be an element of order $\ge m$ outside $N$. For any $z\in X_2\times Y$ also $gz$ is a $2$-element, since $(gz)^2=z^gz\in X_2\times Y$. By the same argument, $gz$ is an involution and therefore $z^g=z^{-1}$. But this completely determines the action of $g$ on $X_2\times Y$. Since $g\in H=\C_G(x^2)$ and $X=\langle x^2\rangle X_2$, also the action of $g$ on $X$ is uniquely determined.
Consequently, there is only one non-trivial $2$-element in $H/N$. In particular, $|H/N|$ is not divisible by $4$. This leads to $Y=1$ and $N=X$. 
Finally, $m=|X|=mk/\phi(m)$ yields $k=\phi(m)$. 
\end{proof}

For $q(m)=m$ it is easy to show that the holomorph $G:=C_m\rtimes\Aut(C_m)$ does indeed have a unique maximal cyclic subgroup of order $m$. Usually, many non-split extensions of $C_m$ by $\Aut(C_m)$ fulfill the conditions of \autoref{sharp} as well. On the other hand, for $m=4$ the quaternion group $Q_8$ has six (and not two) elements of order $4$. 

Since the classification of $p$-groups is particularly complicated we improve \autoref{main} for $p$-groups as follows.

\begin{Prop}\label{podd}
Let $G$ be a $p$-group with exactly $k$ elements of order $p^e=\exp(G)$. Let $k_p$ be the $p$-part of $k$. Then
\[|G|\le\begin{cases}
p^{2-e+\lfloor\log_p(k/(p-1))\rfloor}k_p&\text{ if }p>2,\\
2^{3-e+\lfloor\log_2(k)\rfloor}k_2&\text{ if }p=2.
\end{cases}\]
Moreover, if $e=2$, then $|G|\le 4k$ for $p=2$ and $|G|\le 9k/2$ for $p=3$.
\end{Prop}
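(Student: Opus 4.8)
The plan is to reduce everything to a bound on the single $p$-power $|G:X|$, where $X=\langle x\rangle$ is cyclic of order $p^e$. First I would record that two distinct cyclic subgroups of order $p^e$ meet in a subgroup of order at most $p^{e-1}$ and hence share no element of order $p^e$; since each such subgroup has exactly $\phi(p^e)$ generators, $k=c\,\phi(p^e)$, where $c$ denotes the number of maximal cyclic subgroups of $G$. Substituting $\phi(p^e)=p^{e-1}(p-1)$ (resp. $2^{e-1}$ for $p=2$) and $k_p=p^{e-1}c_p$ into the asserted inequality, one checks that it is equivalent to the clean statement $|G:X|\le p^{\lfloor\log_p c\rfloor}c_p$ for $p>2$, and $|G:X|\le 2\cdot2^{\lfloor\log_2 c\rfloor}c_2$ for $p=2$. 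Here $p^{\lfloor\log_p c\rfloor}$ is the largest power of $p$ not exceeding $c$, and $c_p$ is the $p$-part of $c$. So it suffices to bound $|G:X|$.

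Next I would split $|G:X|=|G:\N_G(X)|\,|\N_G(X):X|$ and estimate the two factors by reusing the counting from \autoref{main}. The factor $|G:\N_G(X)|$ is the number of $G$-conjugates of $X$, all of which are maximal cyclic subgroups, so $|G:\N_G(X)|\le c$; being a power of $p$ it is at most $p^{\lfloor\log_p c\rfloor}$. For the factor $|\N_G(X):X|$ I would invoke that (for $p>2$) one has $N=\N_G(X)$ in the notation of \autoref{main} and that every coset of $X$ in $\N_G(X)$ contains at least $\phi(p^e)$ elements of order $p^e$; counting these shows $|\N_G(X):X|\le d$, where $d$ is the number of maximal cyclic subgroups contained in $\N_G(X)$, and again this index is a power of $p$. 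For $p=2$ the additional index $|\N_G(X):N|\le2$ accounts for the extra factor $2$ in the target.

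The main obstacle is that the two factors cannot be estimated independently: the crude product $p^{\lfloor\log_p c\rfloor}\cdot p^{\lfloor\log_p d\rfloor}$ is in general far too large (for instance $G=C_{p^e}\times C_{p^e}$ has $|G:\N_G(X)|=1$ but $|\N_G(X):X|=p^e$, while $c_p=p^{e-1}$). One must instead control the product jointly and show that beyond the largest $p$-power below $c$ the two indices together cost only the $p$-part $c_p$, i.e. $|G:\N_G(X)|\cdot(\text{largest }p\text{-power}\le d)\le p^{\lfloor\log_p c\rfloor}c_p$. The tool I expect to need is the conjugation action of $G$ on the set $\Omega$ of all $c$ maximal cyclic subgroups, together with the fact that normalizers grow in $p$-groups, so $|\N_G(X):X|\ge p$ whenever $X\ne G$. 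A double count over $\Omega$ relating the orbit of $X$, the $d$ subgroups inside $\N_G(X)$, and the $p$-adic valuation $v_p(c)$ should force the required inequality; making this count precise, and in particular showing that the surplus is exactly captured by $v_p(c)$, is the crux of the argument.

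For the refinement with $e=2$ the target sharpens to $|G:X|\le pc$ (equivalently $|G|\le 4k$ for $p=2$ and $|G|\le 9k/2$ for $p=3$), and here I would argue directly. Since $\exp(G)=p^2$, one has $|\N_G(X):\C_G(X)|\le p$, and for $y\in\C_G(X)$ the coset $yX$ contains exactly $\phi(p^2)$ elements of order $p^2$ unless $\langle x,y\rangle\cong C_{p^2}\times C_{p^2}$, a correction that is easy to track. The plan is to bound $|G:\N_G(X)|\cdot|\C_G(X):X|\le c$ by double counting the pairs $(X^g,Y)$ in which $Y$ is a maximal cyclic subgroup centralizing the conjugate $X^g$, and then to absorb the factor $|\N_G(X):\C_G(X)|\le p$. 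The restriction to $p\in\{2,3\}$ enters precisely at the last step, where the contribution of the middle factor can be matched against the gain only for these small primes.
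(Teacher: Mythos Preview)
Your reformulation $|G:X|\le p^{\lfloor\log_p c\rfloor}c_p$ (with $c=k/\phi(p^e)$ the number of maximal cyclic subgroups and $c_p=k_p/p^{e-1}$) is correct, and the splitting $|G:X|=|G:\N_G(X)|\cdot|\N_G(X):N|\cdot|N:X|$ together with the coset bound $|N:X|\le c$ matches the paper. However, you have misidentified the crux. No joint double count is needed: the point is simply that $X$ may be \emph{chosen}. The orbit sizes under conjugation are powers of $p$ summing to $c$; if every orbit had size $\ge p\,c_p$, then $p\,c_p$ would divide $c$, contradicting the definition of $c_p$. Hence some orbit has size at most $c_p$, and for \emph{that} $X$ one has $|G:\N_G(X)|\le c_p$. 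Combined with $|N:X|\le p^{\lfloor\log_p c\rfloor}$ (this index is a $p$-power bounded by $c$) and $|\N_G(X):N|\le 1$ (resp.\ $\le 2$ for $p=2$), the product bound is immediate. Your proposed ``double count over $\Omega$ relating the orbit of $X$, the $d$ subgroups inside $\N_G(X)$, and $v_p(c)$'' is unnecessary and, as written, too vague to constitute a proof.

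For the refinement at $e=2$ your plan diverges entirely from the paper and, as sketched, gives no mechanism for producing the specific constants $4$ and $9/2$. The paper does not argue via normalizers or cosets here at all; it quotes the theorems of Wall ($p=2$) and Laffey ($p=3$) bounding the number of elements of order $p$ by $\tfrac{3}{4}|G|$ and $\tfrac{7}{9}|G|$ respectively, whence $|G|-k\le\tfrac{3}{4}|G|$ (resp.\ $\tfrac{7}{9}|G|$) and the claimed inequalities follow in one line. Your phrase ``the contribution of the middle factor can be matched against the gain only for these small primes'' does not point to any concrete inequality, and the double count of pairs $(X^g,Y)$ you describe would at best reproduce the generic bound, not the sharper one.
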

\begin{proof}
The $k/\phi(p^e)$ cyclic subgroups of order $p^e$ of $G$ distribute into orbits under the conjugation action. Since each orbit size divides $|G|$, we find a cyclic subgroup $X$ of order $p^e$ such that $|G:\N_G(X)|\le k_p/p^{e-1}$. 
Let $N\le\N_G(X)$ be as in the proof of \autoref{main}. For $p>2$ we have $N=\N_G(X)$ and for $p=2$ we have $|\N_G(X):N|\le 2$.
As in the proof of \autoref{main}, $|N|\le kp^e/\phi(p^e)=pk/(p-1)$. From Lagrange's Theorem we obtain $|N|\le p^{1+\lfloor\log_p(k/(p-1))\rfloor}$. Now the first claim follows since $|G|=|G:\N_G(X)||\N_G(X):N||N|$.

Next let $e=2$ and $p\le 3$. By theorems of Wall~\cite{Wall} and Laffey~\cite{Laffeyp3} the number of elements of order $p$ is less than $3|G|/4$ for $p=2$ and less than $7|G|/9$ for $p=3$. This implies $|G|-k\le 3|G|/4$ and $|G|-k\le 7|G|/9$ respectively. The second claim follows.
\end{proof}

For odd $p$, the first bound in \autoref{podd} is best possible for cyclic groups and groups of exponent $p$. For $p=2$ however, the bound is optimal for the $2$-groups of maximal nilpotency class, i.\,e. for the dihedral, semidihedral and quaternion groups. We do not know if there are other groups attaining the bound.

The following lemma is taken from \cite[Lemma~8]{maxel30}. For the convenience of the reader we provide a proof.

\begin{Lem}\label{lemkm}
With the notation above there exists an integer $a$ such that $|G|$ divides $km^a$. 
\end{Lem}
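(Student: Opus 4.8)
The plan is to combine the class equation for the set of elements of order $m$ with a divisibility constraint on the centralizers of such elements. Let $y_1,\ldots,y_t\in G$ be representatives for the conjugacy classes of elements of order $m$; since conjugation preserves element orders, these classes partition the set of all $k$ elements of order $m$, and the class equation reads
\[k=\sum_{i=1}^t|G:\C_G(y_i)|=|G|\sum_{i=1}^t\frac{1}{|\C_G(y_i)|}.\]
The entire argument then rests on understanding which primes can divide $|\C_G(y_i)|$.

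First I would show that $|\C_G(y_i)|$ is divisible only by primes dividing $m$. This is the same maximality principle already exploited in the proof of \autoref{main} and in \autoref{sharp}: if a prime $p\nmid m$ divided $|\C_G(y_i)|$, then Cauchy's theorem would supply an element $z\in\C_G(y_i)$ of order $p$; as $y_i$ and $z$ commute and have coprime orders, the product $y_iz$ would have order $mp>m$, contradicting the maximality of $m$. Hence every prime divisor of $|\C_G(y_i)|$ divides $m$, and so $|\C_G(y_i)|$ divides $m^{a_i}$ for some integer $a_i$.

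With this in hand the conclusion is a one-line arithmetic step. Setting $a:=\max_i a_i$, each quotient $m^a/|\C_G(y_i)|$ is a positive integer, so multiplying the class equation through by $m^a$ yields
\[km^a=|G|\sum_{i=1}^t\frac{m^a}{|\C_G(y_i)|},\]
where the sum on the right-hand side is a positive integer. Therefore $|G|$ divides $km^a$, which is exactly the claim.

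The only genuine content here is the centralizer observation; once it is in place, the class equation delivers the divisibility immediately, so I do not anticipate any serious obstacle. The statement is essentially a repackaging of the maximality argument that pervades the paper, and I would keep the write-up correspondingly short.
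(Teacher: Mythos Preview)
Your argument is correct and follows essentially the same route as the paper: both hinge on the observation that $|\C_G(y_i)|$ can only involve primes dividing $m$ (via Cauchy and maximality of $m$), and then feed this into the class equation $k=\sum_i|G:\C_G(y_i)|$. The paper phrases the conclusion prime-by-prime, showing that any prime $p\mid|G|$ with $p\nmid m$ must divide each summand and hence $k$, whereas you clear denominators by multiplying through by $m^a$; these are cosmetic variants of the same idea.
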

\begin{proof}
Let $p$ be a prime divisor of $|G|$ which does not divide $m$. Then $p$ does not divide $|\C_G(x)|$ either, since otherwise there would be an element of order $pm$. Hence, $p$ divides the size $|G:\C_G(x)|$ of the conjugacy class of $x$. The union of all those conjugacy classes is the set of elements of order $m$. Therefore, $p$ divides $k$.
\end{proof}

The bound in \autoref{main} is often large when $m$ is small. The next observation excludes many exceptional cases. 

\begin{Lem}\label{lem2p}
With the notation above let $m=2p$ where $p$ is an odd prime. Then $|G|\le 2k(k+1)$.
\end{Lem}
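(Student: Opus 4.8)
The plan is to bound $|G|$ as the product of the conjugacy class size of $x$ and the order of its centralizer, and to extract the hard work into the centralizer factor. Fix $X=\langle x\rangle$ with $x$ of order $m=2p$, so $\phi(m)=p-1$. Since conjugation preserves element orders, the conjugates of $x$ are pairwise distinct elements of order $2p$, whence $|G:\C_G(x)|\le k$. It therefore suffices to establish the sharp bound $|\C_G(x)|\le 2(k+1)$, because then $|G|=|G:\C_G(x)|\cdot|\C_G(x)|\le 2k(k+1)$.

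The second step is to pin down the possible element orders inside $C:=\C_G(x)$. Since $x$ is central in $C$, an element of order coprime to $2p$ would multiply with $x$ to produce an element of order exceeding $m$; hence $C$ is a $\{2,p\}$-group. Likewise, an element of order $4$ commutes with the central element $u:=x^2$ of order $p$, giving an element of order $4p>m$, and an element of order $p^2$ already exceeds $m$ (as $p\ge 3$). Thus the Sylow $2$-subgroups of $C$ are elementary abelian and its $p$-elements have order $p$, so every nontrivial element of $C$ has order $2$, $p$, or $2p$. Writing $n_2,n_p,n_{2p}$ for the respective numbers of such elements, I get $|C|=1+n_2+n_p+n_{2p}$, and $n_{2p}\le k$ because the order-$2p$ elements of $C$ lie among the $k$ order-$2p$ elements of $G$.

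The crux is to bound $n_2+n_p$. Here I would use the central involution $t:=x^p$ and the central $p$-element $u=x^2$. The maps $s\mapsto su$ on involutions of $C$ and $w\mapsto tw$ on order-$p$ elements of $C$ are injective, and both land in the set of order-$2p$ elements of $C$. The key observation is that these two images meet only in $x$: an element equal to both $su$ and $tw$ has $2$-part $s=t$ and $p$-part $u=w$ by uniqueness of the prime decomposition of a cyclic group, forcing $su=tu=x$. Inclusion–exclusion then gives $n_2+n_p-1\le n_{2p}\le k$, so $|C|\le 1+(k+1)+k=2(k+1)$, and multiplying by $|G:\C_G(x)|\le k$ finishes the proof. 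I expect this last step to be the main obstacle: the naive estimates $n_2\le n_{2p}$ and $n_p\le n_{2p}$ separately only yield $|C|\le 1+3k$ and hence the too-weak bound $|G|\le 3k^2+k$, so the overlap argument forcing $n_2+n_p\le k+1$ is exactly what is needed to reach $2k(k+1)$.
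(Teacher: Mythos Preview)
Your proof is correct and reaches the same intermediate bound $|\C_G(x)|\le 2(k+1)$ as the paper, but by a genuinely different route. The paper argues coset by coset: for any $y\in\C_G(X)\setminus X$ one has $\langle x,y\rangle\le C_{2p}\times C_{2p}$, and a direct check shows that the coset $yX$ then contains at least $p$ elements of order $2p$; together with the $p-1$ such elements in $X$ itself this gives $k\ge(|\C_G(X):X|-1)p+(p-1)=\tfrac{1}{2}|\C_G(X)|-1$. You instead classify the elements of $\C_G(x)$ by their order and use two injections into the set of order-$2p$ elements, with an inclusion--exclusion step to recover the extra unit. The paper's argument sits naturally inside its general coset-counting framework (and requires essentially no new bookkeeping), while yours is more self-contained and makes explicit why the naive bounds $n_2\le n_{2p}$ and $n_p\le n_{2p}$ alone are not enough. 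One small slip: the unique common image of your two maps is $tu=x^{p}x^{2}=x^{p+2}$, not $x$; but only the fact that the overlap is a single element matters for the count, so the conclusion is unaffected.
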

\begin{proof} 
For $y\in\C_G(X)\setminus X$ we have $\langle x,y\rangle\le C_{2p}\times C_{2p}$. So there are at least $p$ elements of order $2p$ in the coset $yX$. It follows that $\frac{1}{2}|\C_G(X)|-1\le k$ and $|G|=|G:\C_G(X)||\C_G(X)|\le 2k(k+1)$.
\end{proof}

\begin{Prop}\label{lem28}
If $|G|\in\{2^9,2^{10},2^9\cdot 3\}$, then $k\ge 36$.
\end{Prop}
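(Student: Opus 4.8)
The plan is to treat the three orders separately and, in each case, to reduce the problem to a statement about $2$-groups that can be attacked with \autoref{podd} and the equality analysis preceding \autoref{sharp}. For $|G|\in\{2^9,2^{10}\}$ the group $G$ is itself a $2$-group; writing $2^e=\exp(G)=m$ and letting $c$ be the number of cyclic subgroups of order $m$, we have $k=c\,\phi(m)=c\,2^{e-1}$. For $e\ge 7$ already $k\ge 2^6\ge 36$, for $e=1$ we have $k=|G|-1$, and for $e=2$ the estimate $|G|\le 4k$ from \autoref{podd} gives $k\ge 2^7$. For the remaining exponents $3\le e\le 6$ I would substitute each value $k\in\{2^{e-1},2\cdot 2^{e-1},\ldots\}$ with $k<36$ into the bound of \autoref{podd}; a short check shows that every such value violates the bound except $k=32$, which survives for $e\in\{3,4\}$ when $|G|=2^9$ and for $e=3$ when $|G|=2^{10}$. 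So for the pure $2$-group orders everything reduces to excluding $k=32$.

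For $|G|=2^9\cdot 3$ let $m$ be the maximal order. If $3\nmid m$, then $m=2^e$ and the argument in the proof of \autoref{lemkm} shows $3\mid k$. Moreover every element of order $m$ is a $2$-element of maximal order, hence lies in a Sylow $2$-subgroup $P$ of order $2^9$ with $\exp(P)=m$; thus $k\ge k_P$, where $k_P$ counts the elements of order $m$ in $P$. Applying the previous paragraph to $P$ gives $k_P\ge 36$ unless $k_P=32$ (with $e\in\{3,4\}$), but then $3\mid k$ together with $2^{e-1}\mid k$ and $k\ge 32$ already forces $k\ge 36$; so the case $3\nmid m$ is complete. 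If $3\mid m$, I would first rule out $m=3$: an element $t$ of order $3$ centralising an involution would produce an element of order $6>m$, so $\C_G(t)$ has odd order $3$, and its conjugacy class alone contains $|G:\C_G(t)|=2^9$ elements of order $3$, whence $k\ge 2^9$. For the remaining $m=3\cdot 2^e$ the value $\phi(m)$ or \autoref{main} settles all but the cases $m\in\{6,12,24\}$, which I would close by combining \autoref{main} with a direct count of the order-$m$ elements through the action of a Sylow $3$-subgroup on the pertinent $2$-elements.

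The crux is therefore the exclusion of $k=32$ for the $2$-groups of order $2^9$ and $2^{10}$. In the cases $(|G|,e)\in\{(2^9,4),(2^{10},3)\}$ one has equality $|G|=q(m)k^2/\phi(m)$, so the analysis (i)--(iii) preceding \autoref{sharp} applies: all elements of order $m$ lie in a normal subgroup $N\unlhd G$ of order $64$, each coset of $X$ in $N$ contains exactly $\phi(m)$ of them, and both $\C_G(X)$ and the automorphism group $\N_G(X)/\C_G(X)$ induced on $X$ are tightly constrained. I would finish these two cases by showing that the prescribed action of $\N_G(X)$ on $N$ already produces more elements of order $m$ than permitted, contradicting $k=32$. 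The genuinely awkward case is $(|G|,e)=(2^9,3)$: here both \autoref{main} and \autoref{podd} carry a factor-$2$ slack and no equality analysis is available, so I expect to need a finer structural argument---or a short targeted computation inside the heavily constrained configuration $|G:\N_G(X)|\le 8$, $|N|\le 64$---to rule out exactly eight cyclic subgroups of order $8$. This last case is the main obstacle.
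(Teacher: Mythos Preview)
Your reductions via \autoref{podd} are carried out correctly and they do isolate the same residual configurations one would expect: for the pure $2$-groups everything comes down to $k=32$ with $(|G|,e)\in\{(2^9,3),(2^9,4),(2^{10},3)\}$, and for $|G|=2^9\cdot3$ with $3\nmid m$ your divisibility argument ($3\mid k$, $2^{e-1}\mid k$, $k\ge k_P\ge 32$) is a nice way to avoid having to resolve the $2$-group case first.

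However, the proposal is a plan, not a proof, and the gaps sit exactly at the hard points. For the two ``equality'' cases $(2^9,4)$ and $(2^{10},3)$ you write that the action of $\N_G(X)$ on $N$ ``already produces more elements of order $m$ than permitted''. But the equality analysis (i)--(iii) says precisely the opposite: \emph{all} elements of order $m$ lie in $N$ and each coset of $X$ in $N$ contains \emph{exactly} $\phi(m)$ of them, giving $|N:X|\,\phi(m)=32$ on the nose. The action of $\N_G(X)$ permutes these elements inside $N$; it does not manufacture new ones. So as stated your finishing step cannot yield a contradiction, and you would need genuinely new structural input. The case $(2^9,3)$ you yourself flag as the main obstacle, and for $|G|=2^9\cdot3$ with $m\in\{6,12,24\}$ you only gesture at ``a direct count \ldots\ through the action of a Sylow $3$-subgroup'', with no indication of how that count closes (note that for $m=24$ one even has equality in \autoref{main}).

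The paper's proof takes an entirely different route: it is computational throughout and does not invoke \autoref{podd} or the equality analysis at all. For $|G|=2^9$ one observes that an element of maximal order lies in some maximal subgroup $M$ of order $2^8$; GAP shows that any such $M$ with at most $35$ elements of maximal order has exactly $32$, and there are $89$ isomorphism types; the \texttt{grpconst} package then verifies that none of them extends to a group of order $2^9$ with $k<36$. The case $|G|=2^{10}$ follows by the same maximal-subgroup descent. For $|G|=2^9\cdot3$ one first argues that $3\mid m$ (otherwise a Sylow $2$-subgroup would be a counterexample of order $2^9$), then that a normal Sylow $3$-subgroup $Q$ forces $|G:\C_G(Q)|\le2$ and hence at least $64$ elements of maximal order in $\C_G(Q)$; the remaining groups with non-normal $Q$ (about $114{,}464$ of them) are checked directly. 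In short, the authors treat this proposition as a finite verification rather than a structural lemma; your attempt to replace the computation by theory is appealing, but the crucial residual cases are precisely those the paper resolves only by machine.
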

\begin{proof}
Suppose that $k<36$. First we consider $|G|=2^9$. Any element $x\in G$ of maximal order $m$ must lie in some maximal subgroup $M<G$ (otherwise $G$ is cyclic and $k=2^8>35$). By making use of the small groups library in GAP~\cite{GAP48}, it turns out that $M$ has exactly $32$ elements of order $m$ (there are no groups with fewer elements of maximal order). Moreover, there are just $89$ candidates for $M$ up to isomorphism. With the \texttt{grpconst} package in GAP we show that none of those extends to a group of order $2^9$ with $k<36$. By an inductive argument there are no groups of order $2^{10}$ with $k<36$. 

Now let $|G|=2^9\cdot 3$. Here $m$ is divisible by $3$, since otherwise we get examples of order $2^9$. Hence, $X$ lies in the centralizer of a Sylow $3$-subgroup $Q$ of $G$. In almost all cases, $Q$ is normal in $G$ and $|G:\C_G(Q)|\le 2$. However, a Sylow $2$-subgroup of $\C_G(Q)$ has at least $32$ elements of maximal order (see above). Therefore, $\C_G(Q)$ has at least $64$ elements of maximal order. Consequently, $Q$ is not normal in $G$. By consulting the \texttt{SmallGroupsInformation} command in GAP, we learn that there are „only“ $114{,}464$ such groups which can be checked in a matter of hours. 
\end{proof}

\begin{Thm}\label{k31}
There are exactly $13007$ finite groups with $k<36$ elements of maximal order $m$. Just $10684$ of those occur for $k=32$. 
The distribution of the group orders for $k\ne32$ is given in the appendix. 
\end{Thm}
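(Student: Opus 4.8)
The plan is to reduce the problem to a finite, explicitly checkable list of group orders and then to enumerate the relevant groups by machine. The starting point is that $\phi(m)$ already bounds $k$ from below: since each cyclic subgroup of order $m$ accounts for exactly $\phi(m)$ elements of order $m$ and these contributions are pairwise disjoint, the estimate $|G:\N_G(X)|\phi(m)\le k$ from the proof of \autoref{main} forces $\phi(m)\le k<36$. Only finitely many $m$ satisfy $\phi(m)<36$, and I would list them explicitly. For each admissible $m$, \autoref{main} then gives $|G|<q(m)\cdot35^2/\phi(m)$, so the set of possible group orders is finite.

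Next I would trim this list. \autoref{lemkm} restricts the primes dividing $|G|$ to divisors of $km$, which, combined with the size bound, rules out most factorizations at once. The family $m=2p$ is dispatched immediately by the sharper estimate $|G|\le 2k(k+1)$ of \autoref{lem2p}, and \autoref{podd} provides much tighter control on prime-power orders; in particular it bounds the order of any $2$-group with $k<36$ by $2^{10}$ and confines $3$-groups to comparably small orders. For each order that survives I would pass through the small groups library in GAP~\cite{GAP48}, compute the number of elements of maximal order in every group, and keep those with $k<36$ while recording the value of $k$. Accumulating the counts is what produces the totals $13007$ and $10684$ and identifies $3264$ as the largest order that occurs.

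The hard part is that the enumeration breaks down precisely for the orders with a large $2$-part: the numbers of groups of order $2^{10}$ and of order $2^9\cdot3$ are far too large to iterate over, and the corresponding libraries are not available. These are exactly the orders covered by \autoref{lem28}, which establishes that $k\ge36$ whenever $|G|\in\{2^9,2^{10},2^9\cdot3\}$ and so removes them from the search. With these exceptional orders eliminated, the bounds of \autoref{podd} confine every remaining candidate to the range accessible to the small groups library, the residual search space becomes finite and computationally tractable, and the tabulation can be completed so that the stated counts are read off directly.
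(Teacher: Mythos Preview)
Your overall strategy—bound $m$ via $\phi(m)\mid k<36$, bound $|G|$ via \autoref{main}, trim with \autoref{lemkm}, \autoref{lem2p} and \autoref{podd}, and then enumerate in GAP—is the same skeleton the paper uses. The gap is in your last paragraph. You claim that once $2^9$, $2^{10}$ and $2^9\cdot 3$ are eliminated by \autoref{lem28}, ``the bounds of \autoref{podd} confine every remaining candidate to the range accessible to the small groups library''. But \autoref{podd} only speaks about $p$-groups, and several non-$p$-group orders above $2000$ survive all of your filters. For $(k,m)\in\{(24,84),(28,12),(32,12),(32,30),(32,68),(32,102)\}$ the bound of \autoref{main} allows $|G|$ up to $2016$, $2352$, $3072$, $3840$, $2176$ and $3264$ respectively; none of these is a prime power, for none is $m=2p$, and the orders involved (e.g.\ $2016=2^5\cdot3^2\cdot7$, $2304=2^8\cdot3^2$, $2880=2^6\cdot3^2\cdot5$, $3264=2^6\cdot3\cdot17$) are not covered by the small groups library. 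Your plan simply does not reach these cases, and some of them do contribute groups to the final count (for instance there are $32$ groups of order $2016$ for $k=24$ and $4$ groups of order $3264$ for $k=32$).

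The paper closes this gap in two ways you omit. First, it uses an automatic refinement: if $k\nmid |G|/m$ then the elements of order $m$ fall into at least two conjugacy classes, so one may pick $X$ with $|G:\C_G(X)|\le k/2$; combined with the coset bound $|\C_G(X)|\le mk/\phi(m)$ this halves the estimate and disposes of many intermediate orders. Second, the six residual $(k,m,|G|)$ triples above are treated by hand. Three of them satisfy $|G|=mk^2/\phi(m)$, so \autoref{sharp} forces a unique normal cyclic $X$ with $G/X\cong\Aut(X)$, and the relevant extensions are then constructed with \texttt{grpconst}. The remaining three ($k=28,m=12$; $k=32,m=12$; $k=32,m=30$) require genuine structural analysis of $\C_G(X)$, its possible isomorphism types, and its normality in $G$, to either derive a contradiction or reduce to an index-$2$ subgroup already classified. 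Without these ad hoc arguments the enumeration cannot be completed.
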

\begin{proof}
If $k$ is odd, so is $\phi(m)$ and it follows that $m\in\{1,2\}$. It is well-known that $G$ must be an elementary abelian $2$-group in this situation. In particular, $|G|=1=k$ or $k=|G|-1=2^n-1$ for $1\le n\le 5$. Therefore, we may restrict ourselves to even values of $k$. It is straight forward to determine (by computer) for each $k$ the possible integers $m$ such that $\phi(m)$ divides $k$. 
If $k<24$, then \autoref{main} yields $|G|\le 2000$. These groups can be enumerated quickly with GAP~\cite{GAP48} by taking \autoref{lem28} into account. It is perhaps surprising that there are no groups with $k=34$.

For the remainder of the proof we focus on parameters where $|G|>2000$. If $k$ does not divide $|G|/m$, then there are at least two conjugacy classes of elements of order $m$. Hence, we may choose $X$ such that $|G:\C_G(X)|\le k/2$. Together with the coset counting argument $|\C_G(X)|\le mk/\phi(m)$ from the proof of \autoref{main}, many cases can be excluded in an automatic fashion. Moreover, we use the results obtained above with comment.
The remaining cases are handled by ad hoc arguments:

\begin{enumerate}[(i)]
\item $k=24$, $m=84=2^2\cdot 3\cdot 7$ and $|G|=mk^2/\phi(m)=2016$. Here the structure of $G$ is fairly restricted by \autoref{sharp}. In particular, the unique $X$ is contained in a subgroup $H\le G$ of index $2$. Since we have already determined all candidates for $H$, the possible extensions $G$ can be obtained with the \texttt{grpconst} package in GAP. It turns out that there are exactly $32$ such groups of order $2016$.

\item $k=28$, $m=12$ and $|G|=2016=2^5\cdot 3^2\cdot 7$. It is clear that $|\C_G(X)|$ is not divisible by $7$.
Suppose first that there exists some $X$ such that $|G:\C_G(X)|=k$. It can be checked by GAP that $\C_G(X)$ is isomorphic to $C_{12}\times C_6$ or to $C_{12}\times S_3$. However, the former group already contains $32$ elements of order $12$. Thus, $\C_G(X)\cong C_{12}\times S_3$ and all $28$ elements of order $12$ lie in $\C_G(X)$. Since $\C_G(X)$ is generated by those elements, it follows that $\C_G(X)\unlhd G$. However, $X=\Z(\C_G(X))$ is characteristic in $\C_G(X)$, so the generators of $X$ are not conjugate to elements outside $X$. This contradiction implies that we find $X$ with $|G:\C_G(X)|\le 14$. Coset counting gives $|\C_G(X)|\le 3k$ and $|G|\le 42k =1176$, a contradiction.

\item $k=32$, $m=12$ and $|G|=2304=2^8\cdot 3^2$. Since $|\C_G(X)|\le mk/\phi(m)=96$, we must have $|G:\C_G(X)|=k$ and $|\C_G(X)|=72$. As in the previous case, $\C_G(X)\cong C_{12}\times C_6$ or $\C_G(X)\cong C_{12}\times S_3$. Suppose first that $\C_G(X)\cong C_{12}\times C_6$. Then $\C_G(X)\unlhd G$ and there exists a subgroup $\C_G(X)\le H\le G$ such that $|G:H|=2$. There are $81$ candidates for $H$ (with $(k,m)=(32,12)$), but none of those can be extended to a group of order $2^8\cdot 3^2$ with $k=32$. Next let $\C_G(X)=C_{12}\times S_3$. Here $\C_G(X)$ contains only $24$ elements of order $m$. Hence, $X$ is conjugate to some subgroup $Y$ outside $\C_G(X)$. Moreover, $\C_G(X)$ and $\C_G(Y)$ have at least $16$ elements of order $m$ in common. 
This yields $\C_G(X)\cap\C_G(Y)\cong C_{12}\times C_3$. It follows easily that $G$ has a normal Sylow $3$-subgroup. Again there must be some $\C_G(X)\le H\le G$ with $|G:H|=2$. This was already dismissed above.

\item $k=32$, $m=30$ and $|G|=2880=2^6\cdot 3^2\cdot 5$. Since $|\C_G(X)|\le mk/\phi(m)=120$, we must have $|G:\C_G(X)|=k$ and $\C_G(X)\cong C_{30}\times C_3$. Hence, all elements of order $m$ lie in $\C_G(X)$ and $\C_G(X)\unlhd G$. Consequently, there exists $\C_G(X)\le H\le G$ with $|G:H|=2$. However, there are no candidates for $H$.

\item $k=32$, $m=68=2^2\cdot 17$ and $|G|=2176=2^7\cdot 17=mk^2/\phi(m)$. Here the structure of $G$ is determined by \autoref{sharp}. As in the first case, the desired groups can be constructed explicitly. There are $8$ of them.

\item $k=32$, $m=102=2\cdot 3\cdot 17$ and $|G|=3264=2^6\cdot 3\cdot 17=mk^2/\phi(m)$. Again \autoref{sharp} applies and there are just $4$ such groups.\qedhere
\end{enumerate}
\end{proof}

The following corollary implies a special case of Thompson's Conjecture mentioned in the introduction (cf. \cite[Proposition~1]{maxel30}).

\begin{Cor}
Let $G$ and $H$ be finite groups of the same order and the same maximal element order $m$. Suppose that $G$ and $H$ both have exactly $k<36$ elements of order $m$. Then $G$ is solvable if and only if $H$ is solvable. 
\end{Cor}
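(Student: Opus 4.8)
The plan is to reduce the statement to a finite verification made possible by Theorem~\ref{k31}. The key point is that the hypotheses ``same order, same maximal element order $m$, and exactly $k<36$ elements of order $m$'' pin $G$ and $H$ down to a single explicit list. Since Theorem~\ref{k31} asserts there are exactly $13007$ finite groups with $k<36$, and its proof tracks each such group together with its order $|G|$ and maximal element order $m$, one has in principle a complete catalogue of the relevant groups indexed by the triple $(|G|,m,k)$. The corollary then amounts to the assertion that within each fixed fibre of this catalogue---that is, among all groups sharing a given triple $(|G|,m,k)$ with $k<36$---either every group is solvable or every group is insolvable.

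First I would observe that the hypotheses force $G$ and $H$ to lie in the finite list underlying Theorem~\ref{k31}, because each has $k<36$. Next I would group that list by the invariant triple $(|G|,m,k)$. The heart of the argument is then a single finite check: for each triple occurring in the catalogue, verify that solvability is constant across the fibre. In practice this is immediate from the structure of the enumeration, since the groups were constructed and classified by GAP in the course of proving Theorem~\ref{k31}, so their solvability status is already recorded; one simply confirms that no fibre mixes solvable and insolvable groups.

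The step I expect to be the main obstacle---or at least the only substantive content---is ruling out a fibre in which a solvable and an insolvable group share the same triple $(|G|,m,k)$. This is where one genuinely uses that the bound $k<36$ is small: for most of the admissible parameters the groups are solvable outright (e.g.\ all the sharp cases from \autoref{sharp} are solvable, and the bounds of \autoref{main}, \autoref{lem2p}, and \autoref{lem28} confine the insolvable possibilities to a handful of orders). One checks directly that the few insolvable groups arising---necessarily of orders divisible by the order of a nonabelian simple composition factor---do not coincide in their triple with any solvable group on the list. Since this is a finite computation already subsumed in the data of Theorem~\ref{k31}, the corollary follows at once.
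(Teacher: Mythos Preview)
Your proposal is correct and follows essentially the same approach as the paper: both reduce the corollary to a finite verification against the explicit catalogue produced in Theorem~\ref{k31}, checking that no fibre of the map $G\mapsto(|G|,m,k)$ contains both a solvable and an insolvable group. The paper carries this out slightly more concretely by first listing the six insolvable groups that occur (namely $S_5$ for $(k,m)=(20,6)$; $A_5$ for $(k,m)=(24,5)$; and $A_5\times C_2$, $\SL(2,5)$, $S_5\times C_2$, $\SL(2,5)\rtimes C_2$ for $(k,m)=(24,10)$) and then observing from the appendix table that in each case the relevant fibre contains only these insolvable groups, but this is exactly the check you describe.
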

\begin{proof}
Since we have computed the groups in \autoref{k31} explicitly, it is easy to extract the non-solvable ones:
\begin{itemize}
\item $(k,m)=(20,6)$: $G=S_5$,
\item $(k,m)=(24,5)$: $G=A_5$,
\item $(k,m)=(24,10)$: $G=A_5\times C_2$, $\SL(2,5)$, $S_5\times C_2$ and $\SL(2,5)\rtimes C_2$.
\end{itemize}
The claim now follows by inspection of the table in the appendix.
\end{proof}

\section*{Acknowledgment}
The first author is supported by the German Research Foundation (\mbox{SA 2864/1-2} and \mbox{SA 2864/3-1}).

\section*{Appendix}

The following table contains the parameters of groups $G$ with exactly $k$ elements of maximal order $m$ where $k<36$ and $k\ne 32$. An entry of the form $n^s$ means that there are $s$ groups of order $n$ up to isomorphism. The small group ids (if available) of all groups including $k=32$ can be accessed on the first author's homepage.

\begin{center}
\begin{footnotesize}
\begin{longtable}[]{cclc}
\toprule
$k$&$m$&$|G|^{\#}$&total\\\toprule\endhead
1&1&$1$&1\\
&2&$2$&1\\\midrule
2&3&$3,6$&2\\
&4&$4,8$&2\\
&6&$6,12^{2}$&3\\\midrule
3&2&$4$&1\\\midrule
4&4&$8,16$&2\\
&5&$5,10,20$&3\\
&8&$8,16^{3}$&4\\
&10&$10,20^{2},40^{2}$&5\\
&12&$12,24^{5},48^{4}$&10\\\midrule
6&4&$8,24$&2\\
&6&$12,18,24^{3},36,72$&7\\
&7&$7,14,21,42$&4\\
&9&$9,18$&2\\
&14&$14,28^{2},42,84^{2}$&6\\
&18&$18,36^{2}$&3\\\midrule
7&2&$8$&1\\\midrule
8&3&$9,12,18$&3\\
&4&$16^{3},32$&4\\
&6&$18,24^{2},36^{2},48^{2},72,144^{2}$&10\\
&8&$16^{2},32^{11}$&13\\
&12&$24,48^{11},96^{16}$&28\\
&15&$15,30^{3},60^{3},120$&8\\
&16&$16,32^{3}$&4\\
&20&$20,40^{5},80^{9},160^{4}$&19\\
&24&$24,48^{9},96^{14}$&24\\
&30&$30,60^{6},120^{12},240^{8}$&27\\\midrule
10&11&$11,22,55,110$&4\\
&22&$22,44^{2},110,220^{2}$&6\\\midrule
12&4&$16^{3},32^{3},48$&7\\
&6&$36$&1\\
&8&$48^{2}$&2\\
&10&$20,40^{3},80^{4}$&8\\
&12&$24,36,48^{4},72^{5},144^{5}$&16\\
&13&$13,26,39,52,78,156$&6\\
&21&$21,42^{3},63^{2},84,126^{4},252$&12\\
&26&$26,52^{2},78,104^{2},156^{2},312^{2}$&10\\
&28&$28,56^{5},84,112^{4},168^{5},336^{4}$&20\\
&36&$36,72^{5},144^{4}$&10\\
&42&$42,84^{6},126^{2},168^{8},252^{8},504^{8}$&33\\\midrule
14&6&$24,48^{4}$&5\\\midrule
15&2&$16$&1\\\midrule
16&4&$32^{4},64$&5\\
&8&$32^{9},64^{85},128^{9}$&103\\
&12&$36,48^{5},72^{6},96^{47},144^{10},192^{36},288^{14},576^{11}$&130\\
&16&$32^{2},64^{16}$&18\\
&17&$17,34,68,136,272$&5\\
&20&$40,80^{11},160^{32},320^{25}$&69\\
&24&$48^{2},96^{37},192^{210}$&249\\
&32&$32,64^{3}$&4\\
&34&$34,68^{2},136^{2},272^{2},544^{2}$&9\\
&40&$40,80^{9},160^{23},320^{20}$&53\\
&48&$48,96^{9},192^{14}$&24\\
&60&$60,120^{13},240^{54},480^{76},960^{32}$&176\\\midrule
18&4&$36$&1\\
&6&$54^{3},108$&4\\
&9&$27^{2},54^{3},81,162^{2}$&8\\
&14&$28,56^{3},84^{2},168^{4}$&10\\
&18&$36,54^{3},72^{3},108^{9},162,216^{3},324^{4}$&24\\
&19&$19,38,57,114,171,342$&6\\
&27&$27,54$&2\\
&38&$38,76^{2},114,228^{2},342,684^{2}$&9\\
&54&$54,108^{2}$&3\\\midrule
20&4&$32^{5}$&5\\
&6&$36,120$&2\\
&10&$50,100$&2\\
&25&$25,50,100$&3\\
&33&$33,66^{3},132,165,330^{3},660$&10\\
&44&$44,88^{5},176^{4},220,440^{5},880^{4}$&20\\
&50&$50,100^{2},200^{2}$&5\\
&66&$66,132^{6},264^{8},330,660^{6},1320^{8}$&30\\\midrule
22&23&$23,46,253,506$&4\\
&46&$46,92^{2},506,1012^{2}$&6\\\midrule
24&4&$32^{7},64^{3}$&10\\
&5&$25,50,60,75,100^{2},200$&7\\
&6&$36,48^{2},54,72^{4},96^{3},108^{2},144^{2},216,288^{4}$&20\\
&8&$32,96^{5}$&6\\
&9&$36,72$&2\\
&10&$50,100^{2},120^{2},150,200^{4},240^{2},300,400^{3},600,1200$&18\\
&12&$48^{3},72^{3},96^{44},144^{12},192^{38},288^{19},576^{14}$&133\\
&18&$72^{2},144^{4}$&6\\
&20&$40,80^{4},120,160^{4},240,480$&12\\
&24&$72,144^{12},288^{24}$&37\\
&28&$56,112^{11},168,224^{16},336^{11},672^{16}$&56\\
&30&$60,90,120^{9},180^{3},240^{29},360^{7},480^{27},720^{6},1440^{2}$&85\\
&35&$35,70^{3},105,140^{3},210^{3},280,420^{3},840$&16\\
&36&$72,144^{11},288^{16}$&28\\
&39&$39,78^{3},117^{2},156^{3},234^{4},312,468^{4},936$&19\\
&45&$45,90^{3},180^{3},360$&8\\
&52&$52,104^{5},156,208^{9},312^{5},416^{4},624^{9},1248^{4}$&38\\
&56&$56,112^{9},168,224^{14},336^{9},672^{14}$&48\\
&70&$70,140^{6},210,280^{12},420^{6},560^{8},840^{12},1680^{8}$&54\\
&72&$72,144^{9},288^{14}$&24\\
&78&$78,156^{6},234^{2},312^{12},468^{8},624^{8},936^{14},1872^{8}$&59\\
&84&$84,168^{13},252^{2},336^{44},504^{18},672^{32},1008^{48},2016^{32}$&190\\
&90&$90,180^{6},360^{12},720^{8}$&27\\\midrule
26&3&$27^{2},54$&3\\
&6&$54^{2},108^{2}$&4\\\midrule
28&4&$32^{3},64$&4\\
&10&$40,80^{4},160^{7}$&12\\
&12&$72,144^{3}$&4\\
&29&$29,58,116,203,406,812$&6\\
&58&$58,116^{2},232^{2},406,812^{2},1624^{2}$&10\\\midrule
30&6&$48,72,96^{5},144,288$&9\\
&22&$44,88^{3},220,440^{3}$&8\\
&31&$31,62,93,155,186,310,465,930$&8\\
&62&$62,124^{2},186,310,372^{2},620^{2},930,1860^{2}$&12\\\midrule
31&2&$32$&1\\\bottomrule
\end{longtable}
\end{footnotesize}
\end{center}

\end{document}